\documentclass[12pt, a4paper, twosided, reqno]{amsart}
\usepackage{enumerate, cite}
\usepackage{hyperref}

\numberwithin{equation}{section}
\newtheorem{theorem}{Theorem}[section]

\theoremstyle{remark}
\newtheorem{remark}[theorem]{Remark}
\newtheorem{example}[theorem]{Example}
\newtheorem{definition}[theorem]{Definition}

\makeatletter
\@namedef{subjclassname@2020}{%
\textup{2020} Mathematics Subject Classification}
\makeatother
\begin{document}

\title[study of Squeezing function Corresponding to Polydisk]{Further study of Squeezing function Corresponding to Polydisk}

\author[N. Gupta]{Naveen Gupta}
\address{Department of Mathematics, University of Delhi,
Delhi--110 007, India}
\email{ssguptanaveen@gmail.com}

\author[A. Kumar]{Akhil Kumar}

\address{Department of Mathematics, University of Delhi,
	Delhi--110 078, India}
\email{98.kumarakhil@gmail.com}

\begin{abstract}
In the present article, we investigate some properties of squeezing function corresponding to polydisk. We give some explicit expressions of squeezing function 
corresponding to polydisk.
\end{abstract}
\keywords{squeezing function; extremal map; holomorphic homogeneous regular domain.}
\subjclass[2020]{32F45, 32H02}
\maketitle

\section{Introduction}
In 2012, F. Deng, Q. Guan, L. Zhang in their paper \cite{2012}, introduced the notion of squeezing function by following the work of K. Liu, X. Sun,
 S. T. Yau, \cite{Yau2004}, \cite{Yau2005} and  S. K. Yeung \cite{yeung}. 
\begin{definition}
Let $\Omega\subset \mathbb{C}^n$ be a bounded domain. For $z\in{\Omega}$ and a holomorphic embedding $f:\Omega\to \mathbb{B}^n$  with $f(z)=0$, define 
$$S_{\Omega}(f,z):=\sup \{r:\mathbb{B}^n(0,r)\subseteq f(\Omega)\},$$
where $\mathbb{B}^n$ denotes unit ball in $\mathbb{C}^n$ and $\mathbb{B}^n(0,r)$ denotes the ball centered at origin with radius $r$ in $\mathbb{C}^n$.
The squeezing function on $\Omega$, denoted by $S_{\Omega}$
is defined as
$$S_{\Omega}(z):=\sup_f\{r:\mathbb{B}^n(0,r)\subseteq f(\Omega)\},$$ 
where supremum is taken over all holomorphic embeddings $f:\Omega\to \mathbb{B}^n$ with $f(z)=0.$
\end{definition}
For other important properties of squeezing functions  
corresponding to unit ball one can refer the following papers:
\cite{uniform-squeezing}, \cite{kim-joo}, \cite{andreev}, \cite{zimmer2}, \cite{kaushalprachi}, \cite{recent}.   
Recently  \cite{polydisc}  N. Gupta, S. K. Pant introduced squeezing function corresponding to polydisk. 
\begin{definition}  \cite{polydisc}
Let $\Omega\subset \mathbb{C}^n$ be a bounded domain. For $z\in{\Omega}$ and holomorphic embedding $f:\Omega\to \mathbb{D}^n$  with $f(z)=0$, define 
$$T_{\Omega}(f,z):=\sup \{r:\mathbb{D}^n(0,r)\subseteq f(\Omega)\},$$
where $\mathbb{D}^n$ denotes unit polydisk and $\mathbb{D}^n(0,r)$ denotes the polydisk centered at origin with radius $r$ in $\mathbb{C}^n$.
The squeezing function corresponding to polydisk on $\Omega$, denoted by $T_{\Omega}$,
is defined as
$$T_{\Omega}(z):=\sup_f\{r:\mathbb{D}^n(0,r)\subseteq f(\Omega)\},$$ 
where supremum is taken over all holomorphic embeddings $f:\Omega\to \mathbb{D}^n$ with $f(z)=0.$
\end{definition}

From definition, it is clear that for any bounded domain $\Omega$, $T_{\Omega}$ is invariant under biholomorphic transformations, that is, 
$T_{f(\Omega)}(f(z))=T_{\Omega}(z)$, where $f$ is any biholomorphic transformation of $\Omega$. 
Also by \cite[Lemma~1.2]{polydisc}, holomorphic homogeneous regular(HHR) domains are same in both the settings, that is,
 if squeezing function corresponding to polydisk $T_{\Omega}$ on $\Omega$ admits positive lower bound, then $\Omega$ is HHR.

 Recently Rong and Yang \cite{rong} extended this notion to generalised squeezing function by taking the embeddings in a bounded, balanced, convex domain. 
 The object, $T_D$(for a bounded domain $D$) is one particular example of generalised squeezing function.
  N. Gupta, S. K. Pant in \cite{d-balanced} introduced more general notion $d$-balanced squeezing function by taking the embeddings in 
  bounded, $d$-balanced domain, convex domains. Now the question comes, what is the importance of the object $T_{D}$, when there are more general set ups available. 
  It seems to us that the set up involving polydisk provides a concrete model space to work with(note that $T_D$ is an example of a generalised squeezing function). 
  The better perspective is gained to deal with difficulty arising in
  arbitrary balanced domains. 
 \section{Main theorems}
 
 We begin with the following observation: In \cite{polydisc}, there is a statement which says that for bounded homogeneous domain $\Omega\subset\mathbb{C}^{n}$, 
  either $S_{\Omega}(z)= \dfrac{1}{\sqrt{n}}T_{\Omega}(z)$ or $T_{\Omega}(z)= \dfrac{1}{\sqrt{n}}S_{\Omega}(z)$. 
  We see that this statement is not true in general. To see this consider the following remark. 
\begin{remark} 
Let $\Omega = \mathbb{B}^{n}\times\mathbb{B}^{n}\times\ldots\times\mathbb{B}^{n}\subset \mathbb{C}^{n^{2}}$ be a bounded domain.
 Then for $z=(z_{1}, z_{2},\ldots, z_{n})\in \Omega$ and $n > 1$(here $z_i=(z_{i1}, z_{i2},\ldots, z_{in})$), we have
$$S_{\Omega}(z)\neq \dfrac{1}{n}T_{\Omega}(z)\  \ \ \mbox{and}\  \ \ T_{\Omega}(z)\neq\dfrac{1}{n} S_{\Omega}(z).$$
Since $\mathbb{B}^{n}$ is a classical symmetric domain, so by \cite[Theorem~7.5]{2012}
$$S_{\Omega}(z) = (S_{\mathbb{B}^{n}}(z_{1})^{-2}+S_{\mathbb{B}^{n}}(z_{2})^{-2}+\ldots +S_{\mathbb{B}^{n}}(z_{n})^{-2})^{-1/2} = \dfrac{1}{\sqrt{n}}.$$
Also, by \cite[Proposition~4.6]{polydisc}, $T_{\Omega}(z)\geq\displaystyle\min_{1\leq i \leq n}T_{\mathbb{B}^{n}}(z_{i})$ and since $T_{\mathbb{B}^{n}}(z_{i})\equiv \dfrac{1}{\sqrt{n}}$ for each $i$, this implies $T_{\Omega}(z)\geq \dfrac{1}{\sqrt{n}}$.\\ 
If $S_{\Omega}(z)= \dfrac{1}{n}T_{\Omega}(z)$, then $T_{\Omega}(z)=\sqrt{n}$ which is not possible.\\
If $T_{\Omega}(z)= \dfrac{1}{n}S_{\Omega}(z)$, then $T_{\Omega}(z)=\dfrac{1}{n^{3/2}} < \dfrac{1}{\sqrt{n}}$ which is not possible. 
It should be noted that $\Omega = \mathbb{B}^{n}\times\mathbb{B}^{n}\times\ldots\times\mathbb{B}^{n}\subset \mathbb{C}^{n^{2}}$ is bounded homogeneous domain.
\end{remark}
Before stating our next theorem we would like to fix notation for Poincaré metric on unit disc. We consider positive real valued function  $\sigma:[0, 1)\to \mathbb{R^{+}}$ defined as
 $\sigma(x) = \log\dfrac{1+x}{1-x}$. It is clear that $\sigma$ is strictly increasing function and for $z\in\mathbb{D}$, $\sigma(|z|)$ is the Poincaré distance between 0 and $z$. One can observe that the inverse of this function can be expressed as $\tanh\left(\dfrac{x}{2}\right)$.

\begin{theorem}
Let $\Omega = \Omega_{1}\times\Omega_{2}\times\ldots\times\Omega_{n}\subset \mathbb{C}^{n}$ be a  bounded product domain such that for each $1\leq i\leq n$ 
there is a singleton component $\{p_i\} $ of
 $\overline{\mathbb{C}}-\Omega_{i}$. Then for $z=(z_{1}, z_{2},\ldots, z_{n})\in \Omega$, 
 $T_{\Omega}(z) \leq \min \lbrace \sigma^{-1}(K_{\widetilde{\Omega}_{1}}(z_{1}, p_{1})), \sigma^{-1}(K_{\widetilde{\Omega}_{2}}(z_{2}, p_{2})),
 \ldots, \sigma^{-1}(K_{\widetilde{\Omega}_{n}}(z_{n}, p_{n})) \rbrace,$
where \\$\widetilde{\Omega}_{i}= \Omega_{i}\cup \lbrace p_{i} \rbrace $ and $K_{\widetilde{\Omega}_{i}}(\cdot, \cdot)$
 is the Kobayashi distance on $\widetilde{\Omega}_{i}$ for $i = 1, 2,\ldots, n$. In particular, $\Omega$ is not holomorphic homogeneous regular domain. 
\end{theorem}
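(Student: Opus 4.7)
The plan is to show, for every holomorphic embedding $f:\Omega\to\mathbb{D}^n$ with $f(z)=0$ and $\mathbb{D}^n(0,r)\subseteq f(\Omega)$, the bound $r\le \sigma^{-1}(K_{\widetilde{\Omega}_i}(z_i,p_i))$ for each $i$, and then take the supremum over $f$ followed by the minimum over $i$. The singleton-component hypothesis is exactly what makes this work: since $\Omega_i$ is bounded, no singleton component of $\overline{\mathbb{C}}\setminus\Omega_i$ can contain $\infty$, so $p_i\in\mathbb{C}$ and $p_i$ is an interior point of $\widetilde{\Omega}_i$, with a small punctured disc around $p_i$ entirely contained in $\Omega_i$.

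First I would invoke Riemann's removable singularity theorem, applied one variable at a time, to extend the bounded holomorphic map $f$ to a holomorphic map $\widetilde{f}:\widetilde{\Omega}_1\times\cdots\times\widetilde{\Omega}_n\to\overline{\mathbb{D}}^n$; each removal is across the complex hypersurface $\{w_i=p_i\}$, and boundedness is automatic from $f(\Omega)\subseteq\mathbb{D}^n$. Next, fix $i$ and set $\widetilde{p}_i:=(z_1,\ldots,z_{i-1},p_i,z_{i+1},\ldots,z_n)$. Since $f$ is a homeomorphism onto its image and $\widetilde{p}_i\notin\Omega$, any preimage of $\widetilde{f}(\widetilde{p}_i)$ under $f$ would, by continuity of $f^{-1}$ applied to the sequence $f(z_1,\ldots,w_k,\ldots,z_n)\to\widetilde{f}(\widetilde{p}_i)$ with $w_k\to p_i$, coincide with $\widetilde{p}_i$ itself; hence $\widetilde{f}(\widetilde{p}_i)\notin f(\Omega)$, and in particular $\widetilde{f}(\widetilde{p}_i)\notin\mathbb{D}^n(0,r)$. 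So some coordinate index $j_0$ (possibly depending on $i$) satisfies $|\widetilde{f}_{j_0}(\widetilde{p}_i)|\ge r$.

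Freezing the other coordinates at $z_j$ and letting $g(w):=\widetilde{f}_{j_0}(z_1,\ldots,w,\ldots,z_n)$ produces a holomorphic $g:\widetilde{\Omega}_i\to\overline{\mathbb{D}}$ with $g(z_i)=0$ and $|g(p_i)|\ge r$. The maximum modulus principle prevents $|g|$ from attaining the value $1$ at the interior point $p_i$ (that would force $g$ constant, incompatible with $g(z_i)=0$ and $r>0$), so in fact $g:\widetilde{\Omega}_i\to\mathbb{D}$. The distance-decreasing property of the Kobayashi metric then yields
\[
\sigma(r)\le\sigma(|g(p_i)|)=K_{\mathbb{D}}(0,g(p_i))\le K_{\widetilde{\Omega}_i}(z_i,p_i),
\]
and strict monotonicity of $\sigma$ gives $r\le\sigma^{-1}(K_{\widetilde{\Omega}_i}(z_i,p_i))$. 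Taking the supremum over $f$ and the minimum over $i$ produces the stated inequality.

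For the ``in particular'' claim, letting $z_i\to p_i$ with the other $z_j\in\Omega_j$ fixed makes $K_{\widetilde{\Omega}_i}(z_i,p_i)\to 0$ by continuity of the Kobayashi pseudodistance on the manifold $\widetilde{\Omega}_i$, so $T_\Omega(z)\to 0$ and $\Omega$ is not HHR. The step I expect to be most delicate is showing $\widetilde{f}(\widetilde{p}_i)\notin\mathbb{D}^n(0,r)$: the only hypothesis strong enough to force this is that $f$ is an \emph{embedding}, not merely a bounded holomorphic map; everything else---Riemann removability, the maximum modulus principle, and Kobayashi contraction---is a routine invocation of classical tools.
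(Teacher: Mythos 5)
Your proposal is correct, and at the decisive step it is more careful than the paper's own proof. Both arguments share the same skeleton: extend across the puncture by Riemann's removable singularity theorem, show the extended value is omitted from $f(\Omega)$ using continuity of $f^{-1}$, and conclude via the distance-decreasing property of the Kobayashi distance together with $K_{\mathbb{D}}(0,w)=\sigma(|w|)$. The difference is in how the $n$-dimensional embedding is reduced to a one-variable statement. The paper writes $f=(f_{1},\ldots,f_{n})$ and immediately asserts that each $f_{i}$ is a holomorphic embedding of $\Omega_{i}$ into $\mathbb{D}$ with $f_{i}(z_{i})=0$ and $\mathbb{D}(0,c)\subseteq f_{i}(\Omega_{i})$; this splitting is not justified --- the components of an embedding of a product domain into $\mathbb{D}^{n}$ need not depend on one variable each (e.g.\ $(z_{1},z_{2})\mapsto(z_{1},z_{2}+z_{1}^{2})$ on a small bidisk), and the rest of the paper's argument leans on it both for the injectivity of $f_{i}$ and for the inclusion $\mathbb{D}(0,c)\subseteq f_{i}(\Omega_{i})$. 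You instead keep the full map, extend it across the hypersurface $\{w_{i}=p_{i}\}$, use the embedding property of the whole $f$ to get $\widetilde{f}(\widetilde{p}_{i})\notin f(\Omega)\supseteq\mathbb{D}^{n}(0,r)$, and only then select a coordinate $j_{0}$ with $|\widetilde{f}_{j_{0}}(\widetilde{p}_{i})|\geq r$ and restrict to the $i$-th slice; the small extra cost is the maximum-modulus observation that the slice function maps into $\mathbb{D}$ rather than $\overline{\mathbb{D}}$, which you supply, and you also make explicit the limiting argument $z_{i}\to p_{i}$ behind the ``not HHR'' conclusion, which the paper omits. One shared caveat: both you and the paper implicitly use that $p_{i}$ is an \emph{isolated} point of $\overline{\mathbb{C}}-\Omega_{i}$ (so that $\widetilde{\Omega}_{i}$ is open); a singleton connected component need not be isolated in general, but that is a defect of the hypothesis as stated rather than of your argument.
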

\begin{proof}
For $z=(z_{1}, z_{2},\ldots, z_{n})\in \Omega$, let $T_{\Omega}(z) = c$. Then there exist a holomorphic embedding $f=(f_{1}, f_{2},\ldots, f_{n}) :\Omega\to \mathbb{D}^n$ 
such that $f(z)=0$ with $\mathbb{D}^n(0,c)\subseteq f(\Omega)$.
 So there is a holomorphic embedding $f_{i}:\Omega_{i}\to \mathbb{D}$ such that $f_{i}(z_{i})=0$ with $\mathbb{D}(0,c)\subseteq f_{i}(\Omega_{i})$ for each $i = 1, 2,\ldots, n$.

For each $1\leq i \leq n$ , by Riemann’s removable singularity theorem there exists a holomorphic function $\widetilde{f}_{i}: \widetilde{\Omega}_{i}\to \mathbb{D}$ 
with $\widetilde{f}_{i}(\zeta) = f_{i}(\zeta)$ for every $\zeta\in \Omega_{i}$. Now we will show that $\widetilde{f}_{i}(p_{i})\notin f_{i}(\Omega_{i})$. 
For this, let $\widetilde{f}_{i}(p_{i})\in f_{i}(\Omega_{i})$ and $\lbrace \zeta_{n} \rbrace$ be a sequence in $\Omega_{i}$ converging to $p_{i}$,
 then by continuity of $\widetilde{f}_{i}$ at $p_{i}$, $\lbrace \widetilde{f}_{i}(\zeta_{n})\rbrace$ converges to $\widetilde{f}_{i}(p_{i})$.
  This implies $\displaystyle\lim_{n\to \infty}f_{i}(\zeta_{n}) = \widetilde{f}_{i}(p_{i})$. By using continuity of $f_{i}^{-1}$,
$$f_{i}^{-1}(\widetilde{f}_{i}(p_{i})) = f_{i}^{-1}\left(\lim_{n\to \infty}f_{i}(\zeta_{n})\right) = \lim_{n\to \infty}\zeta_{n},$$
which is not possible. So
\begin{equation}
\widetilde{f}_{i}(p_{i})\notin f_{i}(\Omega_{i})
\end{equation}  
Also by decreasing property of Kobayashi distance 
\begin{align*}
K_{\mathbb{D}}(\widetilde{f}_{i}(z_{i}), \widetilde{f}_{i}(p_{i})) &\leq K_{\widetilde{\Omega}_{i}}(z_{i}, p_{i})\\
K_{\mathbb{D}}(0, \widetilde{f}_{i}(p_{i})) &\leq K_{\widetilde{\Omega}_{i}}(z_{i}, p_{i})\\
\sigma(|\widetilde{f}_{i}(p_{i})|) &\leq K_{\widetilde{\Omega}_{i}}(z_{i}, p_{i}).
\end{align*}
By using (2.1) $$c \leq |\widetilde{f}_{i}(p_{i})| \leq \sigma^{-1}(K_{\widetilde{\Omega}_{i}}(z_{i}, p_{i})).$$
Since $i$ is fixed, the result follows.
\end{proof}
\begin{theorem}
Let $\Omega = \Omega_{1}\times\Omega_{2}\subset \mathbb{C}^{2}$ be a bounded product domain such that $\overline{\mathbb{C}}-\Omega_{1}$ has finite number of components and none of them 
is singleton. Assume that $\Omega_{2}$ is a simply connected domain in $\mathbb{C}$. Then for $z=(z_{1}, z_{2})$
$$\lim_{z_{1}\to\partial\Omega_{1}}T_{\Omega}(z) = 1.$$
\end{theorem}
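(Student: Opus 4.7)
The plan is to sandwich $T_\Omega(z)$ between the trivial upper bound $1$ and a lower bound tending to $1$. First I would invoke the product-domain lower bound (cf.\ Proposition~4.6 of \cite{polydisc}): given embeddings $f_i:\Omega_i \to \mathbb{D}$ with $f_i(z_i)=0$ and $\mathbb{D}(0,r_i)\subset f_i(\Omega_i)$, the map $(f_1,f_2):\Omega\to \mathbb{D}^2$ witnesses
\[
T_\Omega(z_1,z_2)\;\geq\;\min\{T_{\Omega_1}(z_1),\,T_{\Omega_2}(z_2)\}.
\]
Because $\Omega_2$ is a bounded simply connected planar domain, the Riemann mapping theorem, composed with a disk automorphism sending the image of $z_2$ to the origin, gives $T_{\Omega_2}(z_2)=1$. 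Hence the problem reduces to proving $T_{\Omega_1}(z_1)\to 1$ as $z_1\to\partial \Omega_1$.

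For the planar factor, let $E_0,E_1,\dots,E_m$ denote the finitely many connected components of $\overline{\mathbb{C}}\setminus \Omega_1$, each a non-degenerate continuum by hypothesis. For each $j$, the set $U_j:=\overline{\mathbb{C}}\setminus E_j$ is a simply connected subdomain of $\overline{\mathbb{C}}$ (standard topology: the complement of a continuum in the sphere, when connected, is simply connected) whose complement contains at least two points. The Riemann mapping theorem yields a biholomorphism $\Phi_j:U_j\to \mathbb{D}$. Restriction to $\Omega_1\subset U_j$ gives a holomorphic embedding $\Phi_j:\Omega_1\hookrightarrow \mathbb{D}$ with image $\mathbb{D}\setminus K_j$, where $K_j:=\Phi_j\bigl(\bigcup_{k\neq j}E_k\bigr)$ is a fixed compact subset of $\mathbb{D}$, independent of $z_1$ and bounded away from $\partial \mathbb{D}$.

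Given $z_1\in \Omega_1$, I would postcompose with the automorphism $\psi\in \mathrm{Aut}(\mathbb{D})$ sending $a:=\Phi_j(z_1)$ to $0$; then $\phi:=\psi\circ\Phi_j$ embeds $\Omega_1$ into $\mathbb{D}$ with $\phi(z_1)=0$ and image $\mathbb{D}\setminus\psi(K_j)$. As $z_1\to E_j$, properness of the biholomorphism $\Phi_j$ forces $a\to\partial \mathbb{D}$, and the standard identity
\[
1-|\psi(w)|^2 \;=\; \frac{(1-|w|^2)(1-|a|^2)}{|1-\bar{a}\,w|^2}
\]
shows $|\psi(w)|\to 1$ uniformly for $w$ in the compact set $K_j$. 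Consequently $\mathbb{D}(0,r)\subset \phi(\Omega_1)$ for $r$ arbitrarily close to $1$, giving $T_{\Omega_1}(z_1)\to 1$ as $z_1\to E_j$. Finiteness of $\{E_j\}_{j=0}^{m}$ then upgrades this to $T_{\Omega_1}(z_1)\to 1$ as $z_1\to \partial \Omega_1$, and combined with the reduction above we conclude $T_\Omega(z)\to 1$.

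The main obstacle I anticipate is the planar estimate $T_{\Omega_1}(z_1)\to 1$; the non-singleton hypothesis on each $E_j$ is essential, as it is precisely what supplies the Riemann map $\Phi_j:U_j\to \mathbb{D}$ (and hence the compactness of $K_j$ strictly inside $\mathbb{D}$). If some $E_j$ were a single point, $U_j$ would be $\overline{\mathbb{C}}$ minus a point, no such Riemann map would exist, and in fact Theorem~2.2 forces $T_{\Omega_1}$ to stay bounded away from $1$ — so the argument is tight with respect to the standing hypothesis.
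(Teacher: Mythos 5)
Your proposal is correct and follows essentially the same route as the paper: uncover one complement component $E_j$ via the Riemann map of $\overline{\mathbb{C}}\setminus E_j$, observe that the remaining components become a fixed compact subset of $\mathbb{D}$, and push that compact set to the boundary with a disk automorphism centered at the image of $z_1$, handling the simply connected factor $\Omega_2$ trivially. The only cosmetic difference is that you quantify the last step with the explicit identity for $1-|\psi(w)|^2$, whereas the paper phrases it through the Poincar\'e distance from $f_1(z_1)$ to the compact set tending to infinity and applies $\sigma^{-1}$.
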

\begin{proof}
Let $E_{1}, E_{2},\ldots, E_{n},\ n\geq 1$ bee connected components of $\overline{\mathbb{C}}-\Omega_{1}$. Let $\Omega_{1}'= \overline{\mathbb{C}}\setminus E_{1}$ 
which is clearly a simply connected domain with $\Omega_{1}\subseteq\Omega_{1}'$. Since $E_{1}$ is not a singleton, by Riemann mapping theorem there is a injective 
holomorphic function from $\Omega_{1}'$ onto $\mathbb{D}$. Note that $f_{1}(\Omega_{1})$ is domain in $\mathbb{D}$ which we get after deleting $n-1$ connected 
compact subsets, say $E_{2}',\ldots, E_{n}'$.

For $z=(z_{1}, z_{2})\in \Omega$, let $l_{f_{1}(z_{1})}$ denotes the distance between $f_{1}(z_{1})$ and $\cup_{i=2}^{n}E_{i}'$ 
with respect to Poincaré metric. Let $\mathbb{P}(f_{1}(z_{1}), l_{f_{1}(z_{1})})$ be the disc centered at $f_{1}(z_{1})$ with radius $l_{f_{1}(z_{1})}$.
 Choose an injective  holomorphic function $g_{1}:\mathbb{D}\to\mathbb{D}$ with $g_{1}(f_{1}(z_{1})) = 0$. 
 Since Poincaré metric is invariant under conformal self map of $\mathbb{D}$, $g_{1}$ maps $\mathbb{P}(f_{1}(z_{1}), l_{f_{1}(z_{1})})$ 
 onto another disc centered at origin with same radius $l_{f_{1}(z_{1})}$ with respect to Poincaré metric. 

Also since $\Omega_{2}$ is simply connected bounded domain, there is an injective holomorphic function $f_{2}$ from $\Omega_{2}$ onto $\mathbb{D}$ with $f_{2}(z_{2}) = 0$.
 Clearly the disc centered at $f_{2}(z_{2})$ with radius $l_{f_{1}(z_{1})}$ with respect to Poincaré metric is contained in $f_{2}(\Omega_{2})$.
  Let $g_{2}$ be identity function on $\mathbb{D}$. Choose $g\circ f=(g_{1}\circ f_{1}, g_{2}\circ f_{2})$ on $\Omega$, then $\mathbb{D}^{2}(0, \sigma^{-1}(l_{f_{1}(z_{1})}))$
    is contained in $g(f(\Omega))$. For $f = (f_{1}, f_{2})$, $T_{f(\Omega)}(f(z))\geq \sigma^{-1}(l_{f_{1}(z_{1})})$.
    Since $|f_{1}(z_{1})|\to 1$, $l_{f_{1}(z_{1})}\to \infty$ therefore $\sigma^{-1}(l_{f_{1}(z_{1})})\to 1$. By invariant property of squeezing function 
$$\lim_{z_{1}\to E_{1}} T_{\Omega}(z)=1.$$
Similarly 
$$\lim_{z_{1}\to E_{i}} T_{\Omega}(z)=1,$$
for each $i = 2, 3,\ldots, n$. Hence
$$\lim_{z_{1}\to\partial\Omega_{1}}T_{\Omega}(z) = 1.$$ 
\end{proof}

\section{Examples}

\begin{example}
Let $\mathbb{D}\setminus\lbrace 0 \rbrace = \lbrace \zeta\in\mathbb{C} : 0 <|\zeta|<1 \rbrace$ be punctured disc in $\mathbb{C}$. Then for domain $\Omega =
 \mathbb{D}\setminus\lbrace 0 \rbrace\times\mathbb{D}\setminus\lbrace 0 \rbrace\times\ldots\times\mathbb{D}\setminus\lbrace 0 \rbrace\subset \mathbb{C}^{n}$ 
 and $z=(z_{1}, z_{2},\ldots, z_{n})\in \Omega$, 
$$T_{\Omega}(z) = \min \lbrace |z_{1}|, |z_{2}|,\ldots, |z_{n}| \rbrace.$$
For $z=(z_{1}, z_{2},\ldots, z_{n})\in \Omega$ by \cite[Proposition~4.6]{polydisc} 	
\begin{align*}
T_{\Omega}(z) &\geq \min \lbrace T_{\mathbb{D}\setminus\lbrace 0 \rbrace}(z_{1}), T_{\mathbb{D}\setminus\lbrace 0 \rbrace}(z_{2}),
\ldots, T_{\mathbb{D}\setminus\lbrace 0 \rbrace}(z_{n}) \rbrace\\
&= \min \lbrace S_{\mathbb{D}\setminus\lbrace 0 \rbrace}(z_{1}), S_{\mathbb{D}\setminus\lbrace 0 \rbrace}(z_{2}),\ldots, S_{\mathbb{D}\setminus\lbrace 0 \rbrace}(z_{n}) \rbrace\\
&= \min \lbrace |z_{1}|, |z_{2}|,\ldots, |z_{n}| \rbrace.
\end{align*}
\smallskip
By Theorem 2.1
$$T_{\Omega}(z) \leq \min \lbrace |z_{1}|, |z_{2}|,\ldots, |z_{n}| \rbrace.$$
\end{example}
\begin{example}
For domain $\Omega = \mathbb{D}\times\mathbb{D}\setminus\lbrace 0 \rbrace\subset \mathbb{C}^{2}$ and $z=(z_{1}, z_{2})\in \Omega$, 
$$T_{\Omega}(z) =  |z_{2}|.$$
For $z=(z_{1}, z_{2})\in \Omega$ by \cite[Proposition~4.6]{polydisc} 	
\begin{align*}
T_{\Omega}(z) &\geq \min \lbrace T_{\mathbb{D}}(z_{1}), T_{\mathbb{D}\setminus\lbrace 0 \rbrace}(z_{2})\rbrace\\
&= \min \lbrace S_{\mathbb{D}}(z_{1}), S_{\mathbb{D}\setminus\lbrace 0 \rbrace}(z_{2})\rbrace\\
&= |z_{2}|.
\end{align*}
\smallskip
By similar process as in Theorem 2.1
$$T_{\Omega}(z) \leq  |z_{2}|.$$
\end{example}
\begin{example} 
Let $A_{r} = \lbrace \zeta\in\mathbb{C} : r <|\zeta|<1, r>0\rbrace$ be annulus in $\mathbb{C}$.
 Then for domain $\Omega = A_{r}\times\mathbb{D}\subset \mathbb{C}^{2}$ and $z=(z_{1}, z_{2})\in \Omega$, 
$$T_{\Omega}(z)=
\begin{cases}
\dfrac{r}{|z_{1}|} &         \text{if}, r<|z_{1}|\leq\sqrt{r}\\
|z_{1}| &         \text{if},  \sqrt{r}<|z_{1}|<1.
\end{cases}$$
For $z=(z_{1}, z_{2})\in \Omega$ by \cite[Proposition~4.6]{polydisc} 	
\begin{align*}
T_{\Omega}(z) &\geq \min \lbrace T_{A_{r}}(z_{1}), T_{\mathbb{D}}(z_{2})\rbrace\\
&= \min \lbrace S_{A_{r}}(z_{1}), S_{\mathbb{D}}(z_{2}) \rbrace\\
&= \begin{cases}
\dfrac{r}{|z_{1}|} &   \text{if}, r<|z_{1}|\leq\sqrt{r}\\
|z_{1}| &         \text{if},  \sqrt{r}<|z_{1}|<1
\end{cases}.
\end{align*}
\smallskip
Let $T_{\Omega}(z) = c$. Then there exist a holomorphic embedding $f=(f_{1}, f_{2}) :\Omega\to \mathbb{D}^2$ such that $f(z)=0$ with $\mathbb{D}^2(0,c)\subseteq f(\Omega)$.
 So there is a holomorphic embedding $f_{1}:\Omega_{1}\to \mathbb{D}$ such that $f_{1}(z_{1})=0$ with $\mathbb{D}(0,c)\subseteq f_{1}(A_{r})$. 
 Since holomorphic embedding maps boundary to boundary, so either $f_{1}$ maps outer boundary of $A_{r}$ to outer boundary of $f_{1}(A_{r})$ or $f_{1}$ maps inner boundary of $A_{r}$ 
 to outer boundary of $f_{1}(A_{r})$. 

Assume $f_{1}$ maps outer boundary of $A_{r}$ to outer boundary of $f_{1}(A_{r})$. Let $\Omega'$ denotes the union of $f_{1}(A_{r})$ and compact connected component of
 $\mathbb{D}\setminus f_{1}(A_{r})$, then clearly $\Omega'$ is simply connected domain. 
 By Riemann mapping theorem we can find a injective holomorphic function $g_{1}$ from $\mathbb{D}$ onto $\Omega'$ with $g_{1}(0) = 0$. 
 Consider $\widetilde{f_{1}} = g_{1}^{-1}\circ f_{1}$ which is a holomorphic embedding from $A_{r}$ to $\mathbb{D}$ with $\widetilde{f_{1}}(z_{1})= 0$ and $\widetilde{f_{1}}(\partial\mathbb{D})=\partial\mathbb{D}$ such that $g_{1}^{-1}(\mathbb{D}(0, c))\subseteq \widetilde{f_{1}}(A_{r})$. Also by Schwarz lemma $g(\mathbb{D}(0, c))\subseteq\mathbb{D}(0, c)$. This implies $\mathbb{D}(0, c))\subseteq\widetilde{f_{1}}(A_{r})$. Then by \cite[Theorem~2]{recent}, $c \leq |z_{1}|$.

Assume $f_{1}$ maps inner boundary of $A_{r}$ to outer boundary of $f_{1}(A_{r})$. Let $g_{1}(\zeta)=\dfrac{r}{\zeta}$ be reflection on $A_{r}$. Consider $\widetilde{f_{1}} = f_{1}\circ g_{1}$ which is a holomorphic embedding from $A_{r}$ to $\mathbb{D}$ with $\widetilde{f_{1}}\left(\dfrac{r}{z_{1}}\right)= 0$ and $\widetilde{f_{1}}(\partial\mathbb{D})=\partial\mathbb{D}$ such that $\mathbb{D}(0, c))\subseteq\widetilde{f_{1}}(A_{r})$. Again by \cite[Theorem~2]{recent}, $c \leq r/|z_{1}|$.
\end{example}
\smallskip
\begin{remark} 
Domain $\mathbb{D}\setminus\lbrace 0 \rbrace\times\mathbb{D}\setminus\lbrace 0 \rbrace\times\ldots\times\mathbb{D}\setminus\lbrace 0 \rbrace\subset \mathbb{C}^{n}$ is example of a Pseudoconvex domain in higher dimensions which is not HHR. Also, if $\Omega = \Omega_{1}\times\Omega_{2}\times\ldots\times\Omega_{n}\subset \mathbb{C}^{n}$ be a domain such that for some $i = 1, 2, 3,\ldots, n$, $\Omega_{i}$ is punctured somewhere, then $\Omega$ is not HHR.      
\end{remark}

\section*{Ackowledgement}
We are thankful to our research supervisor Sanjay Kumar Pant for reading its previous draft and suggesting many changes.

\medskip

\end{document}